\newcommand{\N}{\mathbb{N}}  
\newcommand{\R}{\mathbb{R}}  
\theoremstyle{plain} 
\newtheorem{thm}{Theorem}[section]
\newtheorem{lem}{Lemma}[section]
\newtheorem{pro}{Proposition}[section]
\newtheorem{cor}{Corollary}[section]
\newtheorem{claim}{Claim}[section]
\newtheorem{qu}{Question}
\theoremstyle{definition} 
\newtheorem{defn}{Definition}[section]
\theoremstyle{remark}
\title{Unwinding spirals}
\author{Alexander Fish and Laurentiu Paunescu}
\address{School of Mathematics and Statistics, University of Sydney, Australia}
\curraddr{}
\email{alexander.fish@sydney.edu.au}
\email{laurentiu.paunescu@sydney.edu.au}
\thanks{}
\keywords{Logarithmic spiral, unwinding spirals, Planar Geometry}
\subjclass[2010]{Primary: 14H50; Secondary: 51F99}
\date{28 February 2016}                                           
\begin{document}
\begin{abstract}
We show that there is no  bi-Lipschitz homeomorphism of $\R^2$ that maps a spiral with a sub-exponential 
decay of winding radii to an unwinded arc. This result is sharp as shows an example of a logarithmic spiral.
\end{abstract}

\maketitle
\section{introduction}

In this paper we consider a natural question whether it is possible to find a bi-Lipschitz homeomorphism from $\R^2$ to $\R^2$ that will map a given continuous curve $C_1$ onto another continuous curve $C_2$. Recall that $h: \R^d \to \R^d$ is called a bi-Lipschitz map with constant $L > 0$, if it satisfies for all $x,y \in \R^d$:
\[
\frac{1}{L} \| x - y \| \leq \| h(x) - h(y) \| \leq L \| x-y \|.
\]

 Let  $\phi:[0,\infty) \to (0,1]$ be a continuous function  monotonically decreasing to zero. Then we correspond to $\phi$ the spiral $C_{\phi}$: 
\[
C_{\phi}(t)  = \phi(t) e^{it}, \mbox{ } t \in [0,\infty],
\]
where $C_{\phi}(\infty) = \{(0,0)\}$, see Figure \ref{spiral}. 
\bigskip

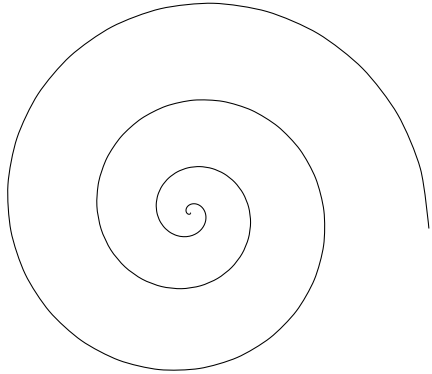
\begin{figure}[htb]
\label{spiral}
\begin{tikzpicture}
    \draw [domain=0:25.2,variable=\t,smooth,samples=100]
        plot ({-\t r}: {0.005*\t*\t});
\end{tikzpicture}
\caption{A spiral.}
\end{figure}
\bigskip

$C_{\phi}$ is  the \textit{logarithmic spiral} if $\phi(t) = e^{-at}$, for $ a > 0$. 
We will define the notion of an unwinded curve around zero in $\R^2$, after introducing the concept of asymptotic directions for a set 
$A \in \R^d$ around a point $o \in \overline{A}$. 
\smallskip

\begin{defn}[asymptotic directions at $o$]
The set of \textit{asymptotic directions} at $o$ in $A$ is defined by
\[
D(A) = \left\{ u \in S^{d-1} \, | \, \exists (a_n) \in A \mbox{ with }a_n \to o\mbox{, and } \frac{a_n - o}{\| a_n - o\|} \to u\right\}. 
\]
\end{defn} 
\smallskip

\begin{defn}[unwinded curve]
We will say that a curve $C \subset \R^2$ containing zero is \textit{unwinded} if $D(C) \neq S^1$. 
\end{defn} 
\smallskip

In this paper we will address the question of the existence of a bi-Lipschitz homeomorphism of $\R^2$ that maps the spiral $C_{\phi}$ of a finite length onto an unwinded curve. Spirals and spiral maps are very important object in geometry and have been used extensively to provide counterexamples to many natural conjectures, see for instance \cite{G}, \cite{FH}. It is well known fact that it is possible to 'unwind'  the logarithmic spiral, see \cite{KNS}:
\smallskip


\begin{thm}[unwinding of the logarithmic spiral]
There exists a bi-Lipchitz homeomorphism of $\R^2$ which maps the logarithmic spiral onto $\{(t,0) \, | \, 0 \leq t \leq 1\}$. 
\end{thm}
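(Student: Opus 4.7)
The plan is to exhibit an explicit ``unwinding'' map and verify it has all the required properties. Define $h:\R^2\to\R^2$ in polar coordinates by
\[
h(r,\theta) = \bigl(r,\, \theta + \tfrac{1}{a}\log r\bigr) \text{ for } r>0, \quad h(0)=0,
\]
or equivalently in complex notation $h(z) = z\,|z|^{i/a}$ for $z\neq 0$. I would first verify that $h$ is a homeomorphism: the inverse is $h^{-1}(r,\theta) = (r, \theta - \frac{1}{a}\log r)$, the same formula with $a$ replaced by $-a$, and since $|h(z)|=|z|$ both $h$ and $h^{-1}$ extend continuously at the origin. The unwinding property is then immediate: parametrizing the logarithmic spiral by $C(t)=e^{-at}e^{it}$, one computes $h(C(t)) = e^{-at}e^{it}\cdot e^{-it}=e^{-at}$, so the spiral is carried bijectively onto the segment $(0,1]\times\{0\}$, with the limit point $0$ going to $0$.

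The bi-Lipschitz estimate is the heart of the proof. I would compute the Wirtinger derivatives from the representation $h(z) = z^{1+i/(2a)}\bar z^{i/(2a)}$:
\[
\frac{\partial h}{\partial z} = \Bigl(1+\tfrac{i}{2a}\Bigr)|z|^{i/a}, \qquad \frac{\partial h}{\partial \bar z} = \tfrac{i}{2a}\,\frac{z}{\bar z}\,|z|^{i/a}.
\]
Their moduli are the \emph{constants} $\sqrt{1+1/(4a^2)}$ and $1/(2a)$, and therefore the singular values of the real differential $Dh$ are the constants $\sigma_\pm = \sqrt{1+1/(4a^2)}\pm 1/(2a)$. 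Since $\sigma_+ \sigma_- = 1$, both $\|Dh\|$ and $\|(Dh)^{-1}\|$ are uniformly bounded by $L:=\sigma_+$ on $\R^2\setminus\{0\}$. The bi-Lipschitz conclusion then follows by integrating along straight segments: for $z_1,z_2\in\R^2$ whose joining segment $\gamma$ avoids $0$,
\[
|h(z_1)-h(z_2)| \le \int_0^1 \|Dh(\gamma(t))\|\,|z_2-z_1|\,dt \le L\,|z_1-z_2|,
\]
with the case of a segment through $0$ handled by a small perturbation and continuity, and the case of an endpoint at $0$ immediate from $|h(z)|=|z|$. The reverse inequality is obtained identically, applied to $h^{-1}$.

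The main conceptual obstacle is that $h$ winds infinitely many times in every neighbourhood of the origin, which at first glance would seem to rule out a bounded Lipschitz constant. The resolution, and the key insight, is that this infinite twist produces only a bounded \emph{shear} of the differential rather than an unbounded rate of rotation; once $\sigma_\pm$ are identified as $z$-independent constants, the remaining path-length argument is routine.
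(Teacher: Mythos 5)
Your proposal is correct, but note that the paper does not actually prove this statement: it is quoted as a known fact with a citation to Katznelson--Nag--Sullivan \cite{KNS}, so there is no internal proof to match. What you have written is a complete, self-contained version of the standard argument behind that citation: the map $h(z)=z|z|^{i/a}$ (the classical ``logarithmic spiral map,'' the same map that appears in \cite{FH} and \cite{G}) straightens $e^{-at}e^{it}$ onto $(0,1]\times\{0\}$, and your Wirtinger computation checks out: $|h_z|=\sqrt{1+1/(4a^2)}$ and $|h_{\bar z}|=1/(2a)$ are genuinely constant, so the singular values of $Dh$ are the constants $\sigma_\pm=\sqrt{1+1/(4a^2)}\pm 1/(2a)$ with $\sigma_+\sigma_-=1$ (equivalently, in the orthonormal polar frames $Dh$ is the constant shear $\bigl(\begin{smallmatrix}1&0\\1/a&1\end{smallmatrix}\bigr)$, and $\det Dh\equiv 1$ makes the bound for $h^{-1}$ immediate). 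You also correctly handle the one real subtlety, the non-convexity of $\R^2\setminus\{0\}$: since removing a point from the plane does not change the intrinsic metric, the path-integration bound survives perturbation of segments through the origin, and $|h(z)|=|z|$ settles pairs with an endpoint at $0$. A bonus of your explicit route over the bare citation is quantitative: the constant $L=\sqrt{1+1/(4a^2)}+1/(2a)$ blows up as $a\to 0^+$, which is exactly consistent with the sharpness claim of Theorem \ref{main_thm} that spirals with sub-exponential decay cannot be unwound at all.
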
 
\smallskip

However, it seems to be unknown, whether it is possible to 'unwind' a spiral with a sub-exponential decay of the radius-vector. We remind the notion of the sub-exponential decay. 

\begin{defn}[sub-exponential decay]
Let $\phi:[0,\infty) \to (0,1]$ be a function monotonically decreasing to zero. We  say that $\phi$  \textit{decays sub-exponentially fast} if $\frac{\log{(\phi(n))}}{n} \to 0$ as $n \to \infty$. 
\end{defn}
Notice that the rate of decay of the function $\phi$ corresponding to the logarithmic spiral is exponential. Our main result is the following:
\bigskip

\begin{thm}\label{main_thm}
Let $\phi:[0,\infty) \to (0,1]$ be a function monotonically sub-exponentially decaying to zero. There is no  bi-Lipschitz homeomorphism of $\R^2$ which maps $C_{\phi}$ into an unwinded curve. 
\end{thm}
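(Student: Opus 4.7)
I assume, for contradiction, that $h$ is an $L$-bi-Lipschitz homeomorphism and $h(C_{\phi})$ is unwinded. Composing with a translation leaves all hypotheses intact and lets me take $h(0)=0$; composing with a reflection if necessary lets me take $h$ orientation-preserving. The assumption furnishes $u\in S^{1}\setminus D(h(C_{\phi}))$; since $D(h(C_{\phi}))$ is closed, I choose $\delta>0$ so that the open arc $V\subset S^{1}$ of $S^{1}$-radius $2\delta$ around $u$ satisfies $\overline{V}\cap D(h(C_{\phi}))=\emptyset$.

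The key quantitative step is a winding/angular estimate along one full loop of the spiral. For each $n\in\N$ set $P_{n}:=C_{\phi}(2\pi n)$, $L_{n}:=C_{\phi}([2\pi n,2\pi(n+1)])$, and close these off by the radial segment to form the Jordan curve $\Gamma_{n}:=L_{n}\cup\overline{P_{n+1}P_{n}}$. Since $L_{n}$ lies in the annulus $\{\phi(2\pi(n+1))\le\|x\|\le\phi(2\pi n)\}$, the curve $\Gamma_{n}$ is simple and its bounded complementary component contains $0$; so its winding number about $0$ is $+1$, and $h$ being orientation-preserving and fixing $0$, the winding number of $h(\Gamma_{n})$ about $0$ is also $+1$. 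The angle form $d\theta$ is defined along $h(\Gamma_{n})$ because $\|h(x)\|\ge\|x\|/L\ge\phi(2\pi(n+1))/L$ on $\Gamma_{n}$. Using the length-over-distance bound and the bi-Lipschitz estimates on length and on distance to $0$,
\[
\left|\int_{h(\overline{P_{n+1}P_{n}})}d\theta\right|\le\frac{\mathrm{length}\,h(\overline{P_{n+1}P_{n}})}{\mathrm{dist}(h(\overline{P_{n+1}P_{n}}),0)}\le L^{2}\left(\frac{\phi(2\pi n)}{\phi(2\pi(n+1))}-1\right)=:\eta_{n}.
\]
Hence the angular coordinate along $h(L_{n})$ has net change $2\pi\pm\eta_{n}$, and an intermediate-value argument then shows that, as a continuous function of arc length, this angular coordinate attains every value in $S^{1}$ except possibly an arc of length $\le\eta_{n}$, which moreover is located within $S^{1}$-distance $\eta_{n}$ of $\arg h(P_{n})$ (the only candidate location consistent with the endpoints $h(P_{n})$ and $h(P_{n+1})$).

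To finish, I exploit sub-exponential decay to drive $\eta_{n}$ below $\delta$ along a subsequence. If instead $\eta_{n}\ge\delta$ held for all large $n$, then $\phi(2\pi n)/\phi(2\pi(n+1))\ge 1+\delta/L^{2}$ eventually, telescoping would yield $-\log\phi(2\pi n)\gtrsim n$, contradicting $\log\phi(N)/N\to 0$. So there are infinitely many $n_{k}\to\infty$ with $\eta_{n_{k}}<\delta$. On the other hand $h(P_{n})\to 0$ lies in $h(C_{\phi})$, so every accumulation value of $\arg h(P_{n})$ belongs to $D(h(C_{\phi}))$, hence lies outside $\overline{V}$; therefore $\arg h(P_{n})$ is eventually at $S^{1}$-distance $\ge\delta$ from $u$. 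For large $k$ the missing arc of $h(L_{n_{k}})$, being within $\eta_{n_{k}}<\delta$ of $\arg h(P_{n_{k}})$, cannot contain $u$, so $h(L_{n_{k}})$ meets the ray in direction $u$ at some point $x_{k}$, with $\|x_{k}\|\le L\phi(2\pi n_{k})\to 0$. Then $u\in D(h(C_{\phi}))$, the desired contradiction.

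The step I expect to be the main obstacle is the geometric bookkeeping of the middle paragraph: checking that $\Gamma_{n}$ is a bona fide Jordan curve (where strict monotonicity of $\phi$, or a harmless perturbation, is convenient), tracking winding numbers cleanly through the homeomorphism, and --- crucially --- localising the missing arc in $S^{1}$ near $\arg h(P_{n})$ rather than at an uncontrolled position, since this is what allows the hypothesis $u\notin D(h(C_{\phi}))$ to rule it out. The fact that the very same bound $\eta_{n}$ simultaneously controls the length of the missing arc and the subsequence extraction (via the rate of decay of $\phi$) is what makes sub-exponential decay the sharp threshold, matching the logarithmic-spiral example.
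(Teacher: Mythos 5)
Your proof is correct, but it takes a genuinely different route from the paper. The paper proceeds by a blow-up argument: Lemma 3.1 extracts a subsequence of winding radii with $r_{n_k+1}/r_{n_k}\to 1$, the rescaled maps $h_k(x)=\frac{1}{r_{n_k}}h(r_{n_k}x)$ converge by Arzela--Ascoli to a bi-Lipschitz limit $\overline{h}$ satisfying $\overline{h}(S^1)\subset LD(h(C_{\phi}))$ (Propositions 1.1 and 2.2, via the $\widetilde{SSP}$ condition), and the conclusion follows because a bi-Lipschitz image of $S^1$ cannot sit inside a proper cone (Jordan curve theorem). You instead argue directly on each loop: your curve $\Gamma_n$ (one turn of the spiral closed by a radial chord) is exactly the curve $\Gamma_k=\Gamma_k'\cup I_k$ that the paper uses in Section 4, but where the paper exploits it only for the special case of a straight-segment image (comparing lengths), you upgrade it to the general case via winding numbers: the image of $\Gamma_n$ has winding number $\pm 1$ about $0$, the length-over-distance estimate bounds the angular variation along the image of the chord by $\eta_n=L^2\left(\frac{\phi(2\pi n)}{\phi(2\pi(n+1))}-1\right)$, so $h(L_n)$ covers all directions except an arc of length at most $\eta_n$ \emph{localized near} $\arg h(P_n)$ --- and this localization, which you rightly single out as the crux, is what allows the hypothesis $u\notin D(h(C_{\phi}))$, together with the fact that accumulation directions of $\arg h(P_n)$ lie in $D(h(C_{\phi}))$, to rule out the missing arc containing $u$. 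Your use of sub-exponential decay is also lighter than the paper's: a telescoping bound producing infinitely many $n$ with $\eta_n<\delta$ replaces the density-one statement of Lemma 3.1. What each approach buys: yours is elementary, quantitative and self-contained, avoiding Arzela--Ascoli and the limit map entirely, but it is intrinsically two-dimensional (winding numbers); the paper's route yields the stronger structural statement $\overline{h}(S^1)\subset LD(h(C_{\phi}))$, and its $\widetilde{SSP}$ machinery operates in $\R^d$. Two minor simplifications of your write-up: orientation-preservation is unnecessary, since winding number $\pm 1$ suffices for the angular count; and no perturbation is needed for simplicity of $\Gamma_n$, since $L_n$ meets the radial chord only at its endpoints (the argument $t$ points in the positive real direction only at $t=2\pi n$ and $t=2\pi(n+1)$), while in the degenerate case $\phi(2\pi n)=\phi(2\pi(n+1))$ monotonicity forces $\phi$ to be constant on the whole interval, so $\Gamma_n=L_n$ is a round circle and the missing arc is empty.
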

\bigskip

Let $C_{\phi}$ be a spiral with a sub-exponential decay of the radius-vector.
It is not hard to see by use of the 'length' of the curves and the triangle inequality, see Section \ref{section4}, that there is no bi-Lipschitz homeomorphism of $\R^2$ that sends $C_{\phi}$ to the segment $\{ (t,0) \, | \, 0 \leq t \leq 1\}$. However, the impossibility of unwinding in general case requires a substantially harder argument. We will use the notion of the cone at $o \in \overline{A} \subset \R^d$.

%
\begin{defn}[cone at $o$]
The cone at $o$ of $A$ is 
\[
LD(A) = \{t u \, | \, u \in D(A), t \geq 0 \}.
\]
\end{defn}

Theorem \ref{main_thm} follows easily from the following claim.

\begin{pro}\label{main_pro}
Let $h:\R^2 \to \R^2$ be a bi-Lipschitz homeomorphism, and let $\phi:[0,\infty) \to (0,1]$ be a function monotonically sub-exponentially decaying to zero. Then there exists a bi-Lipschitz homeomorphism $\overline{h}$ of $\R^2$ with the same Lipschitz constant as $h$, such that $\overline{h}(S^1) \subset LD(h(C_{\phi}))$. 
\end{pro}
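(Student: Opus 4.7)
The plan is to rescale $h$ at a carefully chosen sequence of scales shrinking to zero, pass to an Arzel\`a--Ascoli limit $\overline{h}$ inheriting the bi-Lipschitz constant of $h$, and verify that the rescaled arcs of $C_\phi$ fill out the unit circle while their $h$-images concentrate in the tangent cone of $h(C_\phi)$ at $h(0)$. Let $L$ denote the bi-Lipschitz constant of $h$; after replacing $h$ by $h-h(0)$ we may assume $h(0)=0$.

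The first step is to select scales using sub-exponential decay. Set $a_k := \log\phi(2\pi k)$. Monotonicity of $\phi$ gives $a_k - a_{k+1} \geq 0$, while sub-exponential decay gives $a_k/k \to 0$. The telescoping identity $\sum_{k=0}^{N-1}(a_k - a_{k+1}) = a_0 - a_N = o(N)$ combined with non-negativity of the summands rules out a positive lower bound on the tail, so $\liminf_k(a_k - a_{k+1}) = 0$. Choosing $k_n$ realising this liminf and setting $t_n := 2\pi k_n$, we obtain $t_n \to \infty$ with $\phi(t_n + 2\pi)/\phi(t_n) \to 1$. This is the only place where sub-exponential decay enters.

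Next I construct the rescaled maps and arcs. Put $h_n(x) := h(\phi(t_n)\,x)/\phi(t_n)$. Direct substitution in the bi-Lipschitz bounds shows each $h_n$ is $L$-bi-Lipschitz with $h_n(0) = 0$, so Arzel\`a--Ascoli yields a subsequence converging locally uniformly to an $L$-bi-Lipschitz map $\overline{h}:\R^2 \to \R^2$ fixing the origin; invariance of domain together with the completeness of $\R^2$ upgrades $\overline{h}$ to a homeomorphism. For the spiral, the arcs $A_n := \{\phi(t_n)^{-1} C_\phi(t_n + s) : s \in [0, 2\pi]\}$ have radial coordinate pinched between $\phi(t_n+2\pi)/\phi(t_n) \to 1$ and $1$, while their angular coordinate sweeps a full turn, so $A_n \to S^1$ in the Hausdorff metric and for every $u \in S^1$ there exist $a_n \in A_n$ with $a_n \to u$.

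Finally I conclude. Fix $u \in S^1$ and pick $a_n \in A_n$ with $a_n \to u$. Local uniform convergence of $h_n$ gives $h_n(a_n) \to \overline{h}(u)$. By construction $b_n := \phi(t_n)\,a_n$ lies on $C_\phi$ with $\|b_n\| \to 0$, and $h_n(a_n) = h(b_n)/\phi(t_n)$. Since $\overline{h}$ is $L$-bi-Lipschitz with $\overline{h}(0) = 0$, we have $\|\overline{h}(u)\| \geq 1/L > 0$; hence the unit vectors $h(b_n)/\|h(b_n)\|$ converge to $\overline{h}(u)/\|\overline{h}(u)\| \in D(h(C_\phi))$, whence $\overline{h}(u) \in LD(h(C_\phi))$. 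The chief technical subtlety is the synchronisation of two limits --- that of the scales $t_n$ and that of the maps $h_n$ --- which I intend to resolve by performing the scale extraction first and the Arzel\`a--Ascoli extraction afterwards; the bi-Lipschitz bound on $\overline{h}$ is then precisely what guarantees that the limit point $\overline{h}(u)$ is nonzero, so lies along a genuine direction of $D(h(C_\phi))$ rather than trivially in the cone.
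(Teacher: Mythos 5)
Your proof is correct, and its core mechanism is the same as the paper's: rescale $h$ at scales given by winding radii whose consecutive ratios tend to $1$, extract an Arzel\`a--Ascoli limit $\overline{h}$ (which inherits the bi-Lipschitz constant), and push spiral points approximating $u$ through the rescaled maps so that $\overline{h}(u)$ lands in $LD(h(C_{\phi}))$ --- this is exactly the paper's Proposition \ref{prop1} specialised to the spiral. Where you genuinely differ is the scale-selection step: the paper's Lemma \ref{lemma} extracts a \emph{density-one} set of indices with $r_{n_k+1}/r_{n_k} \to 1$ via a counting argument on the bad sets $R_m$, whereas your telescoping of $a_k = \log\phi(2\pi k)$ gives $\liminf_k (a_k - a_{k+1}) = 0$ in two lines --- a weaker conclusion (a bare subsequence, no density statement), but all that this proposition needs, and considerably more elementary. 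You also bypass the paper's $\widetilde{SSP}$ abstraction by working directly with the arcs $A_n$, and you make explicit two points the paper leaves implicit: surjectivity of the limit map (via invariance of domain plus properness) and the lower bound $\|\overline{h}(u)\| \geq 1/L$, which is indeed what certifies that $\overline{h}(u)$ is a nonzero multiple of a genuine direction in $D(h(C_{\phi}))$ rather than lying trivially at the cone's vertex. Two minor points worth recording in a final write-up: the normalisation $h \mapsto h - h(0)$ tacitly uses that $D$ and $LD$ at the distinguished point are translation-covariant, and the paper's sub-exponential hypothesis $\log(\phi(n))/n \to 0$ along integers transfers to your shifts $2\pi k$ by monotonicity of $\phi$ (sandwich $2\pi k$ between $6k$ and $7k$); both are one-line checks, and neither affects the validity of the argument.
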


\noindent  \textit{Proof of Theorem \ref{main_thm}.} Let us assume that $\phi:[0,\infty) \to [0,1]$ is a function of a sub-exponential decay monotonically decreasing to zero.  If there exists a bi-Lipschitz homeomorphism $h: \R^2 \to \R^2$ that sends the spiral $C_{\phi}$ to an unwinded curve then  by Proposition \ref{main_pro} there exists a bi-Lipschitz map $\overline{h}$ of $\R^2$ with $\overline{h}\left(S^1\right) \subset LD(h(C_{\phi}))$. By the assumption $h$ 'unwinds' $C_{\phi}$, therefore $LD(h(C_{\phi}))$ is a proper cone in $\R^2$. It follows from the Jordan curve theorem that the image of $S^1$ under a Lipschitz homeomorphism of $\R^2$ is not inside a proper cone. Indeed, by Jordan curve theorem the image of $S^1$ divides the plane into two connected components. Since the map is open it follows that the connected component of the image of the disk $\{(x,y) \, |  \, x^2+y^2 \leq 1\}$ is unbounded one, since it  contains the complement of the cone. But the map is a homeomorphism, and we get a contradiction. 

\qed
\bigskip

\noindent \textit{Acknowledgment.} The authors are grateful to Sheehan Olver for his help with the drawings used in this paper.
\bigskip 

\section{Proof of Proposition \ref{main_pro}}\label{section2}

Given a spiral $C_{\phi}$, we will define the sequence $(r_n) $ of \textit{winding radii} corresponding to it as follows:
\[
r_1 = \phi(0), \, r_{2} = \phi(2 \pi)\, , \ldots,\,  r_n = \phi(2 \pi (n-1)), \ldots.
\]
In Section \ref{section3} we will show:

\begin{lem}\label{lemma}
Let $(r_n)$ be a sequence of a sub-exponential decay monotonically decreasing to zero. Then there exists a subsequence of indices $(n_k)$ (of density one), such that 
\[
\frac{r_{n_k+1}}{r_{n_k}} \to 1, \mbox{ as } k \to \infty. 
\]
\end{lem}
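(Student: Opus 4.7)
The plan is to reformulate the conclusion multiplicatively via logarithms and then exploit the fact that a nonnegative sequence whose Ces\`aro averages tend to zero must itself tend to zero along a subsequence of density one.

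First, I would set $a_n = \log(r_n/r_{n+1})$. Since $(r_n)$ is positive and monotonically decreasing, $a_n \geq 0$ for every $n$, and the desired conclusion $r_{n_k+1}/r_{n_k}\to 1$ is equivalent to $a_{n_k}\to 0$. The telescoping identity
\[
\sum_{k=1}^{n-1} a_k \;=\; \log r_1 - \log r_n
\]
combined with the sub-exponential decay hypothesis $\log(r_n)/n \to 0$ gives
\[
\frac{1}{n}\sum_{k=1}^{n-1} a_k \;=\; \frac{\log r_1}{n} - \frac{\log r_n}{n} \;\longrightarrow\; 0
\]
as $n\to\infty$. So the Ces\`aro averages of the nonnegative sequence $(a_n)$ tend to zero.

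Next, I would use the following standard observation: if $(a_n)$ is a nonnegative sequence whose Ces\`aro averages tend to zero, then for every $\varepsilon>0$ the set $E_\varepsilon=\{n : a_n>\varepsilon\}$ has zero upper density. Indeed, if $|E_\varepsilon\cap[1,n]|\geq \delta n$ for infinitely many $n$, then $\frac{1}{n}\sum_{k=1}^n a_k \geq \varepsilon\delta$ along that subsequence, contradicting convergence to $0$. Applying this with $\varepsilon=1/j$ for each $j\in\N$ and doing a standard diagonal construction, I would produce a single set $N\subset\N$ of density one such that $a_n\to 0$ along $N$. Enumerating $N=\{n_k\}$ in increasing order and exponentiating then yields $r_{n_k+1}/r_{n_k}=e^{-a_{n_k}}\to 1$.

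There is no real obstacle here; the only mildly delicate point is the diagonal extraction that upgrades the family of zero-density ``bad sets'' $E_{1/j}$ into a single density-one ``good set''. The standard trick is to choose an increasing sequence $M_1<M_2<\cdots$ such that $|E_{1/j}\cap[1,n]|<n/j$ for all $n\geq M_j$, and then define $N$ by excluding, for each $j$, the indices in $E_{1/j}\cap[M_j,M_{j+1})$. One checks directly that $N$ has density one and that $a_n\to 0$ along $N$, which completes the proof.
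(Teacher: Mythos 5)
Your proposal is correct and is essentially the paper's own proof in additive form: taking logarithms turns the paper's multiplicative counting --- where the product of the bad ratios exceeding $1+\frac{1}{m}$ is played off against the sub-exponential bound $a_N \leq g(N)$ in Claim \ref{claim1} --- into exactly your Chebyshev estimate on $E_\varepsilon$, and your $M_j$-cutoff diagonalization is the same device as the paper's sets $R_m' = R_m \cap [f_1(m),\infty)$ with the union $R=\cup_m R_m'$. The only difference is packaging: you invoke the standard Ces\`aro/Koopman--von Neumann extraction as a black box rather than carrying out the density count explicitly, which is a matter of presentation, not substance.
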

\smallskip

In view of Lemma \ref{lemma} it is natural to define the following notion.
\begin{defn}[regular sequence]
We  say that a positive monotonically decreasing to zero sequence $(r_n)$  is \textit{regular} if there is a subsequence of indices $(n_k)$ such that 
$(r_{n_k})$ satisfies the property:
\[
\lim_{k \to \infty} \frac{r_{n_k+1}}{r_{n_k}} = 1.
\]
\end{defn}

\noindent Next, we define a key property used in this paper.

\begin{defn}[$\widetilde{SSP}$ condition]
We say that $o \in \overline{A}$ satisfies a \textit{sub-sequence selection property} ($\widetilde{SSP}$) if the following holds:
\smallskip

\begin{itemize}
\item $\exists (r_n)$  regular  sequence of radii around $o$.\\
\item For every $u \in D(A)$ there exists a sequence $(a_n) \subset A$ such that $r_{n+1} \leq \| a_n - o \| \leq r_n$ for $n$ large enough, and $\frac{a_n - o}{\|a_n - o\|} \to u$ as $n \to \infty$.  
\end{itemize}
\end{defn}
\smallskip

\noindent As an immediate consequence of Lemma \ref{lemma} we obtain the following implication.
\begin{cor}\label{cor}
Let $\phi:[0,\infty) \to (0,1]$ be a function monotonically sub-exponentially decreasing   to zero. Let $(r_n)$ be the winding radii corresponding to the spiral $C_{\phi}$. Then $o = \{(0,0)\} \in \overline{C_{\phi}}$ satisfies the
($\widetilde{SSP}$) condition.
 \end{cor}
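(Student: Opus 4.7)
The plan is to verify the two bullet points of the $\widetilde{SSP}$ definition directly from the geometry of the spiral. The first bullet — existence of a regular sequence of radii — is essentially immediate: the winding radii are $r_n = \phi(2\pi(n-1))$, and because $\phi$ decays sub-exponentially, so does the sequence $(r_n)$, since $\log(r_n)/n = \log(\phi(2\pi(n-1)))/n \to 0$. Applying Lemma \ref{lemma} to $(r_n)$ then produces a (density-one) subsequence of indices $(n_k)$ with $r_{n_k+1}/r_{n_k} \to 1$, so $(r_n)$ is regular by definition.

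For the second bullet I would first establish that $D(C_\phi) = S^1$. Fix any $u = e^{i\theta} \in S^1$. Setting $t_n = \theta + 2\pi n$ gives $C_\phi(t_n) = \phi(t_n)\, u$, with $\|C_\phi(t_n)\| = \phi(t_n) \to 0$ and $C_\phi(t_n)/\|C_\phi(t_n)\| = u$ for every $n$. Hence $u \in D(C_\phi)$, so every unit vector is an asymptotic direction.

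Next I would realize the approximating sequence in each annulus. Fix $u = e^{i\theta} \in D(C_\phi) = S^1$, and for each $n$ choose the unique $t_n \in [2\pi(n-1), 2\pi n]$ with $e^{i t_n} = u$ (namely $t_n = \theta' + 2\pi(n-1)$ after reducing $\theta$ modulo $2\pi$ into $\theta' \in [0, 2\pi)$). Set $a_n := C_\phi(t_n) = \phi(t_n)\, u$. By monotonicity of $\phi$,
\[
\phi(2\pi n) \le \phi(t_n) \le \phi(2\pi(n-1)),
\]
that is, $r_{n+1} \le \|a_n\| \le r_n$, while $a_n/\|a_n\| = u$ exactly. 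Both conditions of $\widetilde{SSP}$ are therefore satisfied.

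There is essentially no obstacle here; the corollary is a bookkeeping statement combining Lemma \ref{lemma}, which furnishes the regular subsequence, with the structural fact that between two consecutive winding radii the curve $C_\phi$ completes exactly one full revolution, so that every direction in $S^1$ is hit in every annulus $\{r_{n+1} \le \|\cdot\| \le r_n\}$. The only thing to keep in mind is that the definition demands the inclusion $r_{n+1} \le \|a_n\| \le r_n$ to hold for all large $n$, not merely along the regular subsequence, which is why having a full revolution in every annulus (and not only the ``good'' ones) is crucial.
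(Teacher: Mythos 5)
Your proof is correct and is precisely the argument the paper intends: the corollary is stated there as an immediate consequence of Lemma \ref{lemma}, with the regularity of $(r_n)$ supplied by the lemma (sub-exponential decay of $\phi$ transfers to $r_n = \phi(2\pi(n-1))$ by monotonicity) and the second bullet supplied by the fact that the spiral sweeps every direction exactly once in each annulus $r_{n+1} \leq \|\cdot\| \leq r_n$. Your closing observation, that the annulus condition must hold for all large $n$ and not just along the good subsequence, correctly identifies the one point where care is needed.
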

 \smallskip

\noindent Proposition \ref{main_pro}  follows immediately from the following claim and Corollary \ref{cor}.
\begin{pro}\label{prop1}
Let $A \subset \R^d$ and let $h: \R^d \to \R^d$ be a  bi-Lipschitz  homeomorphism which satisfies $h(o) = o$. Assume  that $o \in \overline{A}$ satisfies the condition  ($\widetilde{SSP}$). Then there exists a bi-Lipschitz  homeomorphism
$\overline{h}$ of $\R^d$ such that  
\[
\overline{h}\left( D(A) \right) \subset LD\left( h(A) \right).
\]
\end{pro}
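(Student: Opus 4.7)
The plan is to construct $\overline{h}$ as a tangent map (``blow-up'') of $h$ at $o$, along the regular sequence of radii supplied by $(\widetilde{SSP})$. After translating, assume $o=0$. Let $(r_n)$ be the regular sequence and $(n_k)$ the subsequence of indices with $r_{n_k+1}/r_{n_k} \to 1$. Define
\[
h_k(x) := r_{n_k}^{-1}\, h(r_{n_k} x).
\]
A direct calculation gives that each $h_k$ is bi-Lipschitz with the same constant $L$ as $h$ and $h_k(0)=0$; hence the family is uniformly $L$-Lipschitz, and satisfies $\|h_k(x)\|\le L\|x\|$, so it is equicontinuous and uniformly bounded on every ball. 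Arzel\`a--Ascoli combined with a diagonal extraction over an exhaustion of $\R^d$ by closed balls yields a further subsequence, which I relabel as $(n_k)$, along which $h_k \to \overline{h}$ uniformly on every compact set.

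The limit $\overline{h}$ inherits the bounds $L^{-1}\|x-y\| \le \|\overline{h}(x)-\overline{h}(y)\| \le L\|x-y\|$ and $\overline{h}(0)=0$. Bi-Lipschitz maps are proper, so $\overline{h}(\R^d)$ is closed; by Brouwer's invariance of domain it is also open; connectedness of $\R^d$ then forces $\overline{h}$ to be a bi-Lipschitz homeomorphism of $\R^d$ onto itself.

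It remains to verify $\overline{h}(D(A)) \subset LD(h(A))$. Fix $u \in D(A)$. By $(\widetilde{SSP})$ there exists $(a_n) \subset A$ with $r_{n+1} \le \|a_n\| \le r_n$ for all large $n$ and $a_n/\|a_n\| \to u$. Along the regular subsequence, the ratio $\|a_{n_k}\|/r_{n_k}$ is squeezed between $r_{n_k+1}/r_{n_k}$ and $1$, so
\[
\frac{a_{n_k}}{r_{n_k}} \;=\; \frac{\|a_{n_k}\|}{r_{n_k}} \cdot \frac{a_{n_k}}{\|a_{n_k}\|} \;\longrightarrow\; u.
\]
Uniform convergence $h_k \to \overline{h}$ on a fixed compact neighbourhood of $u$, together with continuity of $\overline{h}$, then gives
\[
r_{n_k}^{-1}\, h(a_{n_k}) \;=\; h_k\!\left(\frac{a_{n_k}}{r_{n_k}}\right) \;\longrightarrow\; \overline{h}(u).
\]
Setting $b_k := h(a_{n_k}) \in h(A)$ we have $b_k \to 0$, while bi-Lipschitzness of $\overline{h}$ with $\overline{h}(0)=0$ forces $\overline{h}(u) \neq 0$. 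Dividing by norms yields $b_k/\|b_k\| \to \overline{h}(u)/\|\overline{h}(u)\|$, whence $\overline{h}(u)/\|\overline{h}(u)\| \in D(h(A))$ and $\overline{h}(u) \in LD(h(A))$.

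The delicate point in this strategy is that a \emph{single} limit map $\overline{h}$, extracted once and for all, must accommodate \emph{every} direction $u \in D(A)$. This is tenable only because $\overline{h}$ depends solely on the regular sequence $(r_n)$ and the extracted subsequence $(n_k)$, not on $u$: the role of $u$ enters only in the final step, where the regularity condition $r_{n_k+1}/r_{n_k} \to 1$ is precisely what upgrades the coarse bound $\|a_n\|\in[r_{n+1},r_n]$ into the exact limit $a_{n_k}/r_{n_k} \to u$, which is what allows us to evaluate the blow-up $\overline{h}$ at $u$.
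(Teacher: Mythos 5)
Your proposal is correct and follows essentially the same route as the paper: rescale via $h_k(x)=r_{n_k}^{-1}h(r_{n_k}x)$, extract a uniform-on-compacta limit $\overline{h}$ by Arzel\`a--Ascoli, use the regularity $r_{n_k+1}/r_{n_k}\to 1$ to squeeze $a_{n_k}/r_{n_k}\to u$, and pass to the limit to place $\overline{h}(u)$ in $LD(h(A))$. Your only additions are welcome bookkeeping the paper leaves implicit, namely the diagonal extraction and the verification via invariance of domain and properness that $\overline{h}$ is a surjective bi-Lipschitz homeomorphism of $\R^d$.
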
 
\smallskip

\noindent \textit{Proof of Proposition \ref{prop1}.} Let $o \in \overline{A} $ be satisfying condition ($\widetilde{SSP}$). Assume that $h: \R^d \to \R^d$ is a bi-Lipschitz  homeomorphism
with  $h(o) = o$. Without loss of generality, assume that $o = \mathbf{0}_{\R^d}$. Let $(r_n)$ be a regular sequence of radii around $o$ such that for every $u \in D(A)$ there exists a sequence 
$(a_n) \subset A$ such that $r_{n+1} \leq \| a_n \| \leq r_n$ for $n$ large enough, and $\frac{a_n}{\|a_n\|} \to u$ as $n \to \infty$.
Then there exists $(n_k)$ subsequence of indices such that 
\[
\frac{r_{n_k +1}}{r_{n_k}} \to 1 \mbox{ as } k \to \infty.
\]
Denote by $(h_k)$ a sequence of bi-Lipschitz homeomorphisms defined by $h_k(x) = \frac{1}{r_{n_k}}h(r_{n_k}x)$. By Arzela-Ascoli theorem, since all these maps have the same Lipschitz constant, there is a subsequence along which the limit exists. Denote the limit by $\overline{h}$, and without loss of generality, assume that $h_k \to \overline{h}$. 
\smallskip

\noindent Next, take any $u \in D(A)$.
Then for  $k \in \N$ large, $a_{n_k} \in A$ satisfies  
\begin{equation}\label{ineq_main}
\| a_{n_k} - r_{n_k} u \| \ll r_{n_k},
\end{equation}
by the triangle inequality and the identity 
\[
\| a_{n_k} - \| a_{n_k}\| u \| \ll \| a_{n_k}\|. 
\]

\noindent Now we apply $h$ on the inequality (\ref{ineq_main}), and obtain
\[
\left\| \frac{1}{r_{n_k}}h(a_{n_k}) - \frac{1}{r_{n_k}} h(r_{n_k} u) \right\| \to 0, \mbox{ as } k \to \infty. 
\]
Therefore, we have $\frac{1}{r_{n_k}}h(a_{n_k}) \to \overline{h}(u)$, which shows that  there exists $s > 0$ with $s \overline{h}(u) \in D(h(A))$. In other words, we have shown that  $\overline{h}(u) \subset LD(h(A))$.
\qed
\bigskip

\section{Proof of Lemma \ref{lemma}}\label{section3}
 Let $a_n = \frac{1}{r_n}$. Then $a_n$ is an increasing sequence, and  $a_n \leq g(n)$ satisfying $\frac{\log{(g(n))}}{n} \to 0$, as $n \to \infty$. It is enough to show that there exists a sparse set of indices $R \subset \N$, such that the sequence $b_n = \frac{a_{n+1}}{a_n}$ converges to one as $n \to \infty$ and $n \in \N \setminus R$. For every $m \in \N$, denote by 
 \[
 R_m = \left\{ n \in \N \, | \, \frac{a_{n+1}}{a_n} \geq 1 + \frac{1}{m}\right\}.
\]
Our first claim is the following.
\smallskip

\begin{claim}\label{claim1}
For $N$ large enough (independent of $m$) we have 
\[
\left|R_m \cap [1,N] \right| \leq 3 m \ln{(g(N))}.
\]
\end{claim}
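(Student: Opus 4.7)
The heart of the argument is a telescoping growth estimate: each index $n \in R_m$ forces the sequence $(a_n)$ to jump by a multiplicative factor of at least $1+1/m$, while at every other index monotonicity contributes a factor of at least $1$. Consequently the cardinality $|R_m \cap [1,N-1]|$ is controlled from above by the logarithmic size of $a_N$, and hence by $\log g(N)$.

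Concretely, setting $k := |R_m \cap [1, N-1]|$, I would telescope to get
\[
a_N = a_1 \prod_{n=1}^{N-1} \frac{a_{n+1}}{a_n} \geq a_1 \left(1+\tfrac{1}{m}\right)^{k},
\]
applying the definition of $R_m$ at the $k$ indices in $R_m \cap [1,N-1]$ and monotonicity at the remaining ones. Since $r_1 = \phi(0) \in (0,1]$, one has $a_1 = 1/r_1 \geq 1$, and the bound $a_N \leq g(N)$ then yields $(1+1/m)^k \leq g(N)$. Taking logarithms and using the elementary inequality $\log(1+x) \geq x/(1+x) \geq x/2$ for $x \in (0,1]$ (which specializes to $\log(1+1/m) \geq 1/(2m)$) gives $k \leq 2m \log g(N)$.

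To finish, $|R_m \cap [1, N]| \leq k + 1 \leq 2m \log g(N) + 1$. For $N$ large enough that $g(N) \geq e$ — a condition independent of $m$, since $a_n \to \infty$ (because $r_n \to 0$) forces $g(n) \to \infty$ — the inequality $m \log g(N) \geq 1$ absorbs the additive $+1$, yielding $|R_m \cap [1, N]| \leq 3m \log g(N)$ as claimed.

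I do not anticipate a real obstacle: the argument amounts to a one-line telescoping product followed by a logarithmic rearrangement. The two small things to get right are the choice of lower bound on $\log(1+1/m)$ (which is what pins down the explicit constant $3$) and the observation that $a_1 \geq 1$, which ensures the $-\log a_1$ term coming out of the logarithm has the favourable sign.
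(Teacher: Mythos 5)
Your proof is correct and takes essentially the same route as the paper's: a telescoping multiplicative estimate $a_N \geq a_1\left(1+\tfrac{1}{m}\right)^{k}$ played against $a_N \leq g(N)$, then $\ln(1+\tfrac{1}{m}) \geq \tfrac{1}{2m}$, with the slack between $2m$ and $3m$ absorbing the additive constant for $N$ large independently of $m$. The only cosmetic difference is that you normalize via $a_1 = 1/\phi(0) \geq 1$ where the paper instead subtracts $n_o$, the first index with $a_n \geq 1$ (so it does not need $r_1 \leq 1$); your explicit handling of the possible contribution of the index $N$ itself is, if anything, slightly more careful than the paper's.
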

\begin{proof}
For any fixed (large) $N$, denote by $c = |R_m \cap [1,N]|$. Since the largest possible index contained in $R_m \cap [1,N]$ is $N$, and $(a_n)$ is an increasing sequence,  we get the bound
\[
a_N \geq \left(1+\frac{1}{m}\right)^{c-n_o},
\] 
where $n_o$ is the smallest $n$ for which $a_n \geq 1$.
On other hand, we also have $a_N \leq g(N)$. The last two inequalities imply that
\[
(c - n_o )\log{ (1+\frac{1}{m})} \leq \ln{(g(N))}. 
\]
This implies  $c -n_o \leq \frac{\ln(g(N))}{\ln (1+\frac{1}{m})} \leq 2 m  \ln(g(N))$, where in the last inequality we used the easy fact that $\frac{1}{\ln(1+x)} \leq \frac{2}{ x}, \mbox{ for } 0  <  x \leq 1$.
\smallskip

\noindent Thus, we have for $N$ large enough (independent of $m$) that $c \leq 3 m \ln(g(N))$, which finishes the proof of the claim.
\end{proof}
\smallskip

Next, we define the sets $R_m' \subset \N$ of indices in the following way:
\[
R_m' = R_m \cap [f_1(m),\infty),
\]
where $f_1: \N \to \N$ is very fast increasing function satisfying that $\ln(g(n)) f_2^2(n) \ll n$, where
\[
f_2(n) = \min\{ m \in \N \, | \, f_1(m) \geq n\}.
\]
And, finally, let
\[
R = \cup_{m \geq 1} R_m'.
\]  
We will show that $R$ is a set of density zero. It is clear that $\frac{a_{n+1}}{a_n} \to 1,$ as $n \to \infty \mbox{ and }  n \in \N \setminus R$. Our final claim shows the sparsity of $R$.
\smallskip

\begin{claim}
For $N$ large enough we have $\left|R \cap [1,N]\right| \leq 3 \ln(g(N)) f_2^2(N)$.
\end{claim}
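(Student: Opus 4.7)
My plan is to prove this claim by a union bound, combining the definition of $R$ as a countable union of sets $R_m'$ with the cardinality estimate already obtained in Claim~\ref{claim1}. The whole architecture of the definitions of $f_1$ and $f_2$ is set up so that only finitely many values of $m$ can contribute indices below $N$, which is what makes this union bound finite.

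The first step is to determine which $R_m'$ can intersect $[1,N]$. By definition $R_m' = R_m \cap [f_1(m),\infty)$, so any $n \in R_m' \cap [1,N]$ must satisfy $f_1(m) \leq n \leq N$. Since $f_1$ is increasing and $f_2(N) = \min\{m : f_1(m) \geq N\}$, any $m > f_2(N)$ would force $f_1(m) > N$, a contradiction. Hence only indices $m \leq f_2(N)$ can contribute, and
\[
R \cap [1,N] \;\subseteq\; \bigcup_{m=1}^{f_2(N)} R_m \cap [1,N].
\]

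The second step is to apply Claim~\ref{claim1} to each of these $f_2(N)$ sets. For $N$ large enough (uniformly in $m$, as stated in Claim~\ref{claim1}), we have $|R_m \cap [1,N]| \leq 3m\ln(g(N))$. Summing over $m$ gives
\[
|R \cap [1,N]| \;\leq\; 3\ln(g(N)) \sum_{m=1}^{f_2(N)} m \;=\; \tfrac{3}{2}\ln(g(N)) f_2(N)(f_2(N)+1) \;\leq\; 3\ln(g(N)) f_2^2(N),
\]
which is the claimed bound (the last inequality holds as soon as $f_2(N) \geq 1$).

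I do not anticipate any real obstacle here: the definitions of $f_1$ and $f_2$ were designed precisely so that the truncation in $R_m'$ restricts the union to at most $f_2(N)$ terms, making this a clean arithmetic-progression-style sum. Once the sparsity bound is in hand, the density-zero conclusion (and hence $a_{n+1}/a_n \to 1$ along $\N \setminus R$) follows from the growth hypothesis $\ln(g(n)) f_2^2(n) \ll n$, completing the proof of Lemma~\ref{lemma}.
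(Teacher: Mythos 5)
Your proof is correct and follows essentially the same route as the paper's: you observe that only $m \leq f_2(N)$ can contribute (because $R_m' \subseteq [f_1(m),\infty)$ and $f_2(N)=\min\{m : f_1(m)\geq N\}$), then apply Claim~\ref{claim1} uniformly in $m$ and sum, bounding $\sum_{m=1}^{f_2(N)} m$ by $f_2^2(N)$. Your justification of the truncation step and the exact triangular-sum computation are slightly more explicit than the paper's, but the argument is the same.
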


\begin{proof}
First, notice that for any given $N$, the only contributors to $R \cap [1,N]$ are from the sets $R_1',R_2',\ldots,R_{L}'$, where $L = f_2(N)$. Therefore, by use of Claim \ref{claim1}, we estimate
\[
\left|R \cap [1,N]\right| \leq \sum_{m=1}^L | R_m' \cap [1,N]| \leq \sum_{m=1}^L | R_m \cap [1,N]| 
\]
\[
\leq  \sum_{m=1}^L 3 m \ln(g(N)) \leq 3 L^2 \ln(g(N)) = 3 \ln(g(N)) f_2^2(N),
\]
for $N$ large enough.
\end{proof}
\bigskip

\section{Unwinding spirals to straight segments}\label{section4}

We will give an elementary argument for the following special case of Theorem \ref{main_thm}.

\begin{pro}\label{easy_pro}
Let $\phi:[0,\infty) \to (0,1]$ be a function monotonically sub-exponentially decaying to zero. There is no  bi-Lipschitz homeomorphism of $\R^2$ which maps $C_{\phi}$ into the line segment $\{(t,0) \, | \, 0 \leq t \leq 1 \}$.
\end{pro}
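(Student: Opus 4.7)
The plan is to show that any bi-Lipschitz homeomorphism $h$ with constant $L$ satisfying $h(C_\phi) = [0,1]\times\{0\}$ would force the winding radii $r_n = \phi(2\pi(n-1))$ to decay at least geometrically at a rate depending only on $L$, which contradicts the hypothesis $(\log r_n)/n \to 0$. The key idea is to pick three spiral points that all lie on the real axis, observe that their $h$-images must occur on the target segment in the induced linear order, and combine this collinearity with the triangle inequality and the bi-Lipschitz bounds.

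First I would record that $h|_{C_\phi}$ is a continuous bijection between two topological arcs (namely $C_\phi \cong [0,\infty]$ and $[0,1]\times\{0\}$), hence a monotone homeomorphism in the natural parameterizations. For each $n\geq 1$, I would then select
\[
p_n = C_\phi(2\pi(n-1)) = (r_n, 0), \quad q_n = C_\phi(2\pi(n-1)+\pi) = (-\rho_n, 0), \quad p_{n+1} = (r_{n+1}, 0),
\]
where $\rho_n := \phi(2\pi(n-1)+\pi)$ satisfies $r_n \geq \rho_n \geq r_{n+1}$ by monotonicity of $\phi$. Monotonicity of $h|_{C_\phi}$ places $h(q_n)$ on the segment strictly between $h(p_n)$ and $h(p_{n+1})$, so collinearity yields
\[
\|h(p_n) - h(p_{n+1})\| = \|h(p_n) - h(q_n)\| + \|h(q_n) - h(p_{n+1})\|.
\]
Applying the upper bi-Lipschitz bound on the left and the lower bi-Lipschitz bounds on the right, together with $\rho_n \geq r_{n+1}$ and the straight-line distances $\|p_n - q_n\| = r_n + \rho_n$, $\|q_n - p_{n+1}\| = \rho_n + r_{n+1}$, $\|p_n - p_{n+1}\| = r_n - r_{n+1}$, this produces
\[
L(r_n - r_{n+1}) \geq \frac{r_n + 2\rho_n + r_{n+1}}{L} \geq \frac{r_n + 3 r_{n+1}}{L}.
\]
Rearranging gives $r_{n+1}/r_n \leq (L^2-1)/(L^2+3) =: c < 1$, uniformly in $n$; iterating yields $r_n \leq c^{n-1} r_1$, so $(\log r_n)/n \to \log c < 0$, contradicting sub-exponential decay.

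The only point requiring genuine care is the monotonicity of $h|_{C_\phi}$ and the resulting ordering of $h(p_n), h(q_n), h(p_{n+1})$ on the target segment; this is a short topological observation using that $h$ is a homeomorphism of $\R^2$ taking $C_\phi$ bijectively onto the segment. Beyond this preliminary, the argument is a single application of the triangle inequality combined with the bi-Lipschitz estimates, and I do not anticipate any further obstacle. Note that the argument in fact proves a stronger statement: the winding radii of any spiral that bi-Lipschitz embeds onto a segment must decay at least geometrically.
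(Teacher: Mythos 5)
Your proof is correct, and it takes a genuinely different --- and more elementary --- route than the paper's. The paper first invokes its Lemma \ref{lemma} to extract a subsequence $(n_k)$ with $r_{n_k+1}/r_{n_k} \to 1$, and then runs a length comparison: the spiral arc $\Gamma_k'$ between radii $r_{n_k}$ and $r_{n_k+1}$ has length at least $r_{n_k+1}$, so its image (a subarc of the segment joining the images of its endpoints) forces $\mathcal{L}(h(I_k)) \geq |h(\Gamma_k')| \geq r_{n_k+1}/L$ for the chord $I_k$, while $\mathcal{L}(h(I_k)) \leq L(r_{n_k}-r_{n_k+1}) \ll r_{n_k+1}$. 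You dispense with both the subsequence lemma and the notion of length: the three collinear spiral points $p_n, q_n, p_{n+1}$, ordered along the arc and hence (by strict monotonicity of the injective continuous map $h|_{C_\phi}$ into a line) ordered along the target segment, give exact additivity of distances, and the bi-Lipschitz bounds then yield the uniform ratio bound $r_{n+1}/r_n \leq (L^2-1)/(L^2+3) < 1$ (with the case $L=1$ giving an immediate contradiction). This buys several things the paper's argument does not: it is quantitative, showing that geometric decay of the winding radii at a rate depending only on $L$ is \emph{forced}, which matches the sharpness exhibited by the logarithmic spiral; it avoids the density-one extraction of Section \ref{section3} entirely; and since you only use the bi-Lipschitz inequality on points of the curve itself, it in fact rules out bi-Lipschitz embeddings of $C_\phi$ into any line, not merely ambient homeomorphisms of $\R^2$.

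Two small points to tidy. First, your monotonicity step tacitly assumes $C_\phi$ is a simple arc, i.e., that $t \mapsto \phi(t)e^{it}$ is injective; if $\phi$ is only weakly decreasing this can fail, but only when $\phi$ is constant on an interval of length at least $2\pi$, in which case $C_\phi$ contains a full circle, which no injective continuous map can send into a segment --- so the degenerate case is immediate (the paper glosses over the same point). Second, $r_n \leq c^{\,n-1} r_1$ gives $\limsup_n (\log r_n)/n \leq \log c < 0$ rather than convergence to $\log c$; this still contradicts $(\log r_n)/n \to 0$, which does follow for the winding radii from the definition of sub-exponential decay, since $\frac{\log r_n}{n} = \frac{\log \phi(2\pi(n-1))}{2\pi(n-1)} \cdot \frac{2\pi(n-1)}{n}$.
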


We will use the notion of the length for a curve. Let $h:\R^2 \to \R^2$ be a bi-Lipschitz map. Let $I=[a,b]$ be a subinterval of $\{(t,0) \, | \, 0 \leq t \leq 1 \}$. By a finite partition $P$ of $I$ we mean $t_0 = a < t_1 < \ldots < t_k = b$. To the  partition $P$ we correspond 
the approximate length of $h(I)$ along $P$ as follows:
\[
\mathcal{L}_P(h(I)) = \sum_{j=1}^k \| h(t_j) - h(t_{j-1}). \|
\]
Then  the length of $h(I)$ is defined by 
\[
\mathcal{L}(h(I)) = \sup_{P \mbox{ is a finite partition of } I} \mathcal{L}_P(h(I)).
\]
Notice that if $h(I)$ is a rectifiable curve, then the usual length coincides with the one that we just defined.
If $h$ has Lipschitz constant $L$, then we have $\frac{|I|}{L} \leq \mathcal{L}(h(I)) \leq L |I|$, where $|I|$ denotes the length of $I$ equal to $b-a$. 
\bigskip

\noindent \textit{Proof of Proposition \ref{easy_pro}.} Let us assume that there exists a bi-Lipschitz homeomorphism $h$ of $\R^2$ with Lipschitz constant $L$ that sends a spiral $C_{\phi}$, satisfying the sub-exponential decay condition on winding radii, to the line segment $\{(t,0) \, | \, 0 \leq t \leq 1 \}$. Without loss of generality, we can assume that $h(C_{\phi}) = \{(t,0) \, | \, 0 \leq t \leq 1 \}$. Then by Lemma \ref{lemma} from the sequence of winding radii $(r_n)$ corresponding to the spiral $C_{\phi}$ we can extract a subsequence $(r_{n_k})$ such that 
\begin{equation}\label{fignja}
\frac{r_{n_k+1}}{r_{n_k}} \to 1, \mbox{ as } k \to \infty. 
\end{equation}
Then we look at the closed curve $\Gamma_k$ in $\R^2$ comprising the part of the spiral $\Gamma_k'$ between the radii $r_{n_k}$ and $r_{n_{k}+1}$, and the line segment $I_k$ connecting $r_{n_k+1}$ and $r_{n_k}$, see Figure \ref{fig2}. 
\bigskip

\begin{figure}[htb]
\begin{center}
\includegraphics[clip,height=5cm]{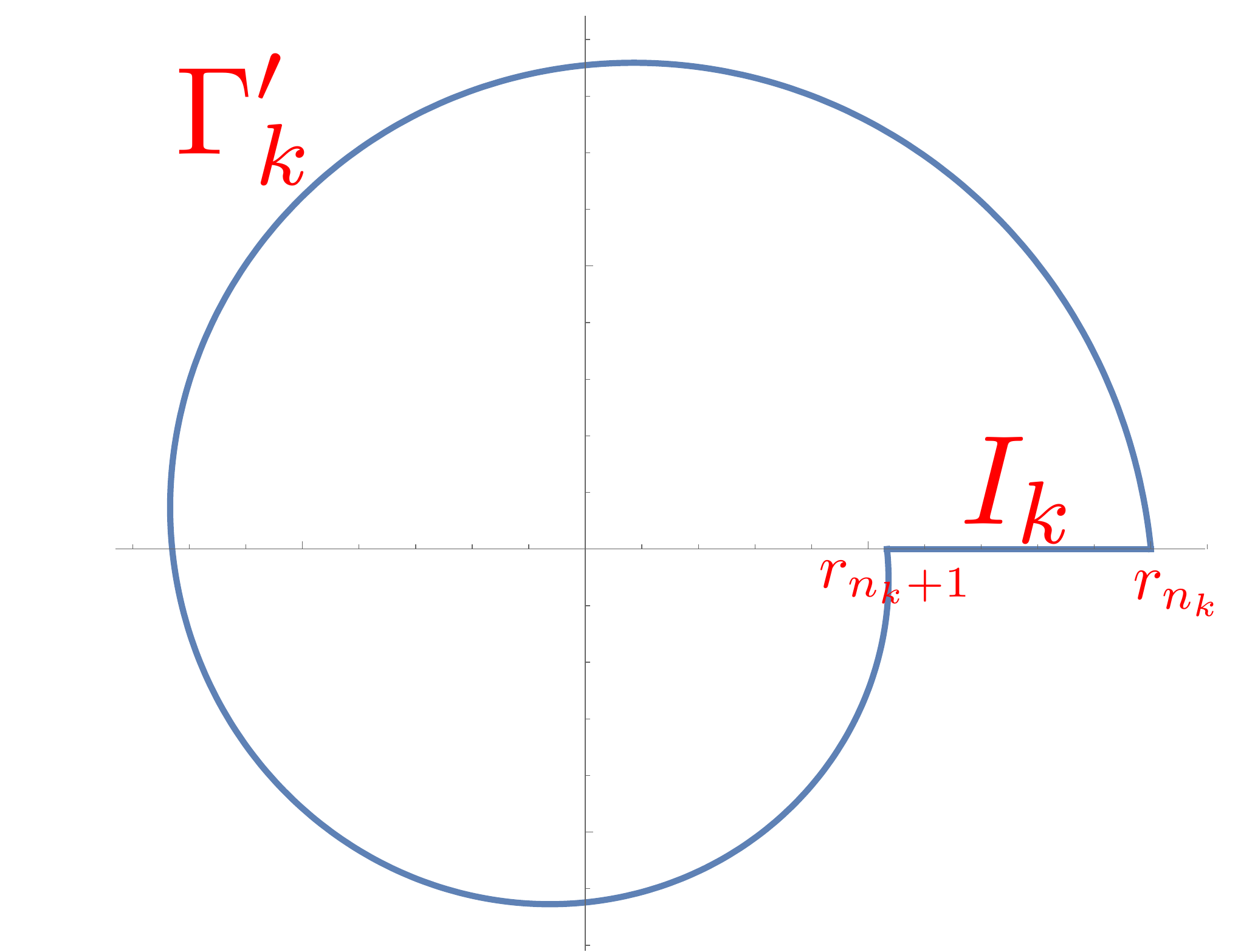}\\
\caption{The curve $\Gamma_k = \Gamma_k' \cup I_k$.}
\label{fig2}
\end{center}
\end{figure}
\bigskip

Since the length of $\Gamma_k'$ is greater or equal than $r_{n_{k}+1}$, it follows that $|h(\Gamma_k')| \geq \frac{r_{n_k+1}}{L}$. On the other hand, $|I_k| = r_{n_k} - r_{n_k+1} \ll r_{n_k+1}$ by (\ref{fignja}). Therefore $\mathcal{L}(h(I_k)) \leq L | I_k | \ll r_{n_k+1}$. Notice that  $h(\Gamma_k')$ is mapped into the straight segment which is the geodesic connecting the points $h((r_{n_k},0))$ and $h((r_{n_k+1},0))$ in $\R^2$. The endpoints of $h(I_k)$ coincide with the endpoints of $h(\Gamma_k')$, and therefore we must have 
\[
\mathcal{L}(h(I_k)) \geq |h(\Gamma_k')|,
\]
which is impossible. This finishes the proof of the proposition.

\qed

\bigskip

\section{Appendix -- Remarks on the (SSP) condition}

The sequence selection property (SSP) condition for the set of directions at a point $o \in \overline{A} \subset \R^d$ was introduced by Koike and the second author in \cite{KP1}, and has been studied in \cite{KP1}, \cite{KP2} and \cite{KP3}. 

\begin{defn}[(SSP) condition]
The set of directions at $o \in \overline{A} \subset \R^d$ satisfies (SSP) condition if for every $u \in D(A)$ and every positive sequence $(t_n)$ decreasing to zero, there exists a sequence $(a_n) \in A$ such that 
\[
\| a_n - t_n u \| \ll \max{(\|a_n\|, t_n)}.
\]
\end{defn}
It is proved in \cite{KP3} the following.
\smallskip

\begin{pro}[Lemma 2.9, \cite{KP3}] \label{pro2}
Let $h: \R^d \to \R^d$ is a bi-Lipschitz homeomorphism with $h(o) = o$, for $o \in \overline{A} \subset \R^d$, and $o \in \overline{A}$ satisfies (SSP) condition. 
Then there exists a bi-Lipschitz homeomorphism $\overline{h}$ of $\R^d$
such that $\overline{h}(LD(A)) \subset LD(h(A))$. 
\end{pro}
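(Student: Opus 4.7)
\noindent The plan is to imitate the argument for Proposition \ref{prop1} above, replacing the winding radii $(r_{n_k})$ by an arbitrary positive sequence $(t_k) \downarrow 0$ and exploiting the fact that (SSP) supplies approximating points in $A$ at every scale, not only along a prescribed sequence of winding scales.

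Without loss of generality $o = \mathbf{0}$. First I would fix any $t_k \downarrow 0$ and set $h_k(x) = \frac{1}{t_k} h(t_k x)$. Each $h_k$ is bi-Lipschitz with the same constant $L$ as $h$ and fixes the origin, so by Arzela-Ascoli we may pass to a subsequence along which $h_k \to \overline{h}$ uniformly on compact sets. Applying the same reasoning to the inverses (using the identity $(h_k)^{-1} = (h^{-1})_{t_k}$, after one further extraction) shows that $\overline{h}$ is a bi-Lipschitz homeomorphism of $\R^d$ with constant $L$.

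It remains to verify $\overline{h}(LD(A)) \subset LD(h(A))$. Since $\overline{h}(\mathbf{0}) = \mathbf{0}$, fix $u \in D(A)$ and $t > 0$ and aim to show $\overline{h}(tu) \in LD(h(A))$. Apply the (SSP) condition to the scaled sequence $(t\,t_k) \downarrow 0$ and the direction $u$ to produce $(b_k) \subset A$ with $\|b_k - t\,t_k u\| \ll \max(\|b_k\|, t\,t_k)$; a triangle-inequality comparison immediately yields $\|b_k\| \sim t\,t_k$, and hence the stronger conclusion $\|b_k - t\,t_k u\| \ll t\,t_k$. Bi-Lipschitzness of $h$ together with uniform convergence $h_k \to \overline{h}$ on compacts then gives
\[
\left\|\frac{h(b_k)}{t_k} - \overline{h}(tu)\right\| \leq L \cdot \frac{\|b_k - t\,t_k u\|}{t_k} + \|h_k(tu) - \overline{h}(tu)\| \to 0.
\]
Since $h(b_k) \in h(A) \setminus \{\mathbf{0}\}$, $h(b_k) \to \mathbf{0}$, and $\overline{h}(tu) \neq \mathbf{0}$ by bi-Lipschitzness of $\overline{h}$, the unit vectors $h(b_k)/\|h(b_k)\|$ must converge to $\overline{h}(tu)/\|\overline{h}(tu)\|$; this limit direction belongs to $D(h(A))$ by definition, whence $\overline{h}(tu) \in LD(h(A))$.

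The main obstacle I anticipate is that the Arzela-Ascoli limit $\overline{h}$ is not automatically homogeneous with respect to positive scaling, so one cannot simply upgrade the conclusion $\overline{h}(D(A)) \subset LD(h(A))$ of Proposition \ref{prop1} to $\overline{h}(LD(A)) \subset LD(h(A))$ for free. The strength of (SSP) compared to ($\widetilde{SSP}$) is exactly what bypasses this: it permits us to run the direction-recovery argument at the scale $t\,t_k$ instead of at $t_k$ alone, yielding $\overline{h}(tu) \in LD(h(A))$ directly for every $t > 0$ with no need to establish any scaling property of $\overline{h}$.
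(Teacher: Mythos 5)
Your proof is correct. Note that the paper itself gives no proof of this proposition --- it is quoted from \cite{KP3} (Lemma 2.9) --- so the only in-paper point of comparison is the proof of Proposition \ref{prop1}, and your argument is precisely that argument upgraded in the expected way: you rerun the rescaling-plus-Arzela--Ascoli scheme, but invoke (SSP) at the scales $t\,t_k$ (legitimate, since (SSP) quantifies over \emph{all} sequences decreasing to zero, whereas ($\widetilde{SSP}$) only supplies the single sequence of winding radii) to capture every point $tu$ of the cone rather than only the directions $u$. Your technical steps check out: the triangle inequality does give $\|b_k\| \sim t\,t_k$, so the maximum in the (SSP) estimate can be replaced by $t\,t_k$; the nonvanishing $\overline{h}(tu) \neq 0$ follows from $h_k(0)=0$ and the uniform lower Lipschitz bound, which justifies passing to unit vectors; the identity $(h_k)^{-1}(y) = \frac{1}{t_k}h^{-1}(t_k y)$ with a second extraction correctly yields that $\overline{h}$ is a genuine bi-Lipschitz homeomorphism of $\R^d$ (a point the paper's proof of Proposition \ref{prop1} leaves implicit); and the remaining case $t=0$ is trivial since $\overline{h}(0)=0 \in LD(h(A))$.
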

\smallskip

The main consequence of Proposition $\ref{pro2}$ is that for any reasonable (satisfying the monotonicity property along Lipschitz maps) notion of the dimension in $\R^d$, we will have the following.

\begin{cor}
Let $h:\R^d \to \R^d$ be a bi-Lipschitz homeomorphism with $h(o) = o$, and let $A \subset \R^d$ be sets with $o \in \overline{A} $. If $o \in \overline{A}$ satisfies the (SSP) condition, then 
\[
Dim(LD(A)) \leq Dim(LD(h(A))). 
\]  
\end{cor}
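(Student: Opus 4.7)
The plan is to derive the corollary directly from Proposition \ref{pro2} together with the assumed monotonicity of $Dim$ along Lipschitz maps. The hypothesis that $o \in \overline{A}$ satisfies the (SSP) condition is exactly what is needed to invoke Proposition \ref{pro2}, so no further work is required to verify its applicability.

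First I would apply Proposition \ref{pro2} to produce a bi-Lipschitz homeomorphism $\overline{h}$ of $\R^d$ with
\[
\overline{h}(LD(A)) \subset LD(h(A)).
\]
This reduces the task from comparing the two cones directly to comparing $LD(A)$ with its bi-Lipschitz image, together with a set inclusion.

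Next, I would use that any "reasonable" dimension is invariant under bi-Lipschitz maps. Applying the hypothesized Lipschitz monotonicity separately to $\overline{h}$ and to $\overline{h}^{-1}$ yields
\[
Dim(\overline{h}(LD(A))) \leq Dim(LD(A)) \quad \mbox{and} \quad Dim(LD(A)) \leq Dim(\overline{h}(LD(A))),
\]
so $Dim(\overline{h}(LD(A))) = Dim(LD(A))$. Combining this with the inclusion from Proposition \ref{pro2}, and with monotonicity of $Dim$ under set inclusion (itself a consequence of Lipschitz monotonicity, applied to the inclusion map), gives
\[
Dim(LD(A)) = Dim(\overline{h}(LD(A))) \leq Dim(LD(h(A))),
\]
which is the desired inequality.

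There is essentially no obstacle: all of the substantive geometric content is concentrated in Proposition \ref{pro2}, whose proof mirrors the argument already given for Proposition \ref{prop1}, extracting a bi-Lipschitz Arzela--Ascoli limit of the rescalings $x \mapsto h(t_n x)/t_n$ along a scale sequence furnished by (SSP). Once that proposition is granted, the present corollary is a short formal argument about how a Lipschitz-monotone notion of dimension interacts with a bi-Lipschitz map and a set inclusion; the only mild point to check is that "reasonable" includes full bi-Lipschitz invariance, and this follows automatically by applying Lipschitz monotonicity to $\overline{h}$ and $\overline{h}^{-1}$ as above.
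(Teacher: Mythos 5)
Your proposal is correct and follows essentially the same route as the paper, which states this corollary as an immediate consequence of Proposition \ref{pro2} combined with the assumed monotonicity of $Dim$ along Lipschitz maps: produce $\overline{h}$ with $\overline{h}(LD(A)) \subset LD(h(A))$, use bi-Lipschitz invariance (via $\overline{h}$ and $\overline{h}^{-1}$) to get $Dim(LD(A)) = Dim(\overline{h}(LD(A)))$, and conclude by monotonicity under inclusion. Your filling-in of the details (deriving bi-Lipschitz invariance from the monotonicity hypothesis applied in both directions) is exactly the short formal argument the paper leaves implicit.
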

\smallskip

The  sub-exponential decay condition on $\phi$ does not guarantee that $C_{\phi}$ satisfies the (SSP) condition. Notice that the conclusion in Proposition \ref{prop1} is much weaker than in Proposition \ref{pro2}.  We can obtain almost the same conclusion as in Proposition \ref{pro2}, if we will require slightly more from a regular sequence of radii $(r_n)$  appearing in the definition of the $\widetilde{SSP}$ condition.  It is not hard to prove the following statement by use of the similar techniques as in the proof of Proposition \ref{prop1}.

\begin{pro}\label{techn_pro}
Let $h: \R^d \to \R^d$ is a bi-Lipschitz homeomorphism with $h(o) = o$, for $o \in \overline{A} \subset \R^d$. Assume that  $o \in \overline{A}$ satisfies ($\widetilde{SSP}$) condition. In addition, assume that the regular family of radii $(r_n)$ has the property that there exists $M > 0$ with
\[
\sup_{n > m} \frac{r_n - r_{n+1}}{r_m - r_{m+1}} < M, \mbox{ for all } m \in \N.
\]
Then there exists a bi-Lipschitz homeomorphism $\overline{h}$ of $\R^d$
such that $\overline{h}(L_{\leq 1}D(A)) \subset LD(h(A))$, where $L_{\leq 1} D(A) = \{ tu \, | \, 0 < t \leq 1, u \in D(A)\}$.
\end{pro}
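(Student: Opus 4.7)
The plan is to reuse the construction of $\overline{h}$ from the proof of Proposition \ref{prop1} and then to upgrade its image property from $D(A)$ to all of $L_{\leq 1}D(A)$, leveraging the new hypothesis on the differences $r_n - r_{n+1}$ to control approximations at intermediate scales. Concretely, I would take the regular subsequence $(n_k)$ with $r_{n_k+1}/r_{n_k} \to 1$, set $h_k(x) = \frac{1}{r_{n_k}} h(r_{n_k} x)$, and extract via Arzela-Ascoli a further subsequence on which $h_k \to \overline{h}$ uniformly on compacta, with $\overline{h}$ bi-Lipschitz and the same constant as $h$.

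Next I would fix $u \in D(A)$ and $t \in (0,1]$; the goal is to exhibit a sequence in $h(A)$ whose rescaled limit is $\overline{h}(tu)$. By $(\widetilde{SSP})$ applied to $u$ I obtain $(a_n) \subset A$ with $r_{n+1} \leq \|a_n\| \leq r_n$ and $a_n/\|a_n\| \to u$. For each large $k$ I let $n(k)$ be the unique index with $r_{n(k)+1} \leq t r_{n_k} \leq r_{n(k)}$. A short monotonicity check using $t \leq 1$ yields $n(k) \geq n_k$: for $t = 1$ one simply sets $n(k) = n_k$, while for $t < 1$ the inequality $n(k) < n_k$ would force $r_{n_k} \leq r_{n(k)+1} \leq t r_{n_k}$, i.e.\ $t \geq 1$, a contradiction.

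The heart of the argument is the estimate
\[
\|a_{n(k)} - t r_{n_k} u\| \;\leq\; \bigl\|a_{n(k)} - \|a_{n(k)}\|\, u\bigr\| + \bigl|\,\|a_{n(k)}\| - t r_{n_k}\bigr|,
\]
whose two summands I aim to show are both $o(r_{n_k})$. The first summand is $o(\|a_{n(k)}\|)$ because $a_n/\|a_n\| \to u$, and since $\|a_{n(k)}\| \leq r_{n(k)} \leq r_{n_k}$ it is also $o(r_{n_k})$. The second summand is at most $r_{n(k)} - r_{n(k)+1}$; the new hypothesis, applied with $m = n_k$ and $n = n(k) \geq n_k$, bounds this by $M(r_{n_k} - r_{n_k+1}) = o(r_{n_k})$ by regularity of the subsequence. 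Applying the Lipschitz bound for $h$ and the construction of $\overline{h}$ then gives $\tfrac{1}{r_{n_k}} h(a_{n(k)}) \to \overline{h}(tu)$, and since $h(a_{n(k)}) \in h(A)$ tends to $o$ with nonzero limiting direction $\overline{h}(tu)/\|\overline{h}(tu)\|$, this direction lies in $D(h(A))$ and hence $\overline{h}(tu) \in LD(h(A))$.

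The main obstacle I anticipate is controlling the approximation uniformly in $t$: for small $t$ the approximating element $a_{n(k)}$ sits at scale $\approx t r_{n_k} \ll r_{n_k}$, and the radial gap it must bridge is taken at an index $n(k)$ potentially far from the regular indices $(n_k)$ where $r_{n+1}/r_n$ is near $1$. The new hypothesis is exactly what upgrades the smallness of $r_{n_k} - r_{n_k+1}$ relative to $r_{n_k}$ to the smallness of $r_{n(k)} - r_{n(k)+1}$ also relative to $r_{n_k}$; without it, the second summand in the estimate cannot be controlled and the argument breaks down precisely for small $t$, which is why the conclusion is stated on $L_{\leq 1}D(A)$ rather than on all of $LD(A)$.
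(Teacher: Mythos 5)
Your proof is correct and follows exactly the route the paper intends: the authors give no proof of this proposition, saying only that it follows ``by use of the similar techniques as in the proof of Proposition \ref{prop1},'' and your argument is precisely that — the same rescaled maps $h_k(x) = \frac{1}{r_{n_k}}h(r_{n_k}x)$ and Arzela--Ascoli limit $\overline{h}$, supplemented by the one genuinely new ingredient, namely the intermediate index $n(k) \geq n_k$ with $r_{n(k)+1} \leq t r_{n_k} \leq r_{n(k)}$ and the difference-ratio hypothesis giving $r_{n(k)} - r_{n(k)+1} \leq M(r_{n_k} - r_{n_k+1}) = o(r_{n_k})$. Your closing remark also correctly identifies why the restriction to $t \leq 1$ is essential (for $t > 1$ the hypothesis bounds the differences in the wrong direction), so the proposal is complete as written.
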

\smallskip

As a corollary of Proposition \ref{techn_pro} we obtain the following.

\begin{cor}\label{int_cor}
Let $h:\R^d \to \R^d$ be a bi-Lipschitz homeomorphism with $h(o) = o$, and let $A \subset \R^d$ be sets with $o \in \overline{A} $. If $o \in \overline{A}$ satisfies the ($\widetilde{SSP}$) condition with the sequence of radii $(r_n)$ satisfying
\[
\sup_{n > m} \frac{r_n - r_{n+1}}{r_m - r_{m+1}} < M, \mbox{ for all } m \in \N.
\]
Then
\[
Dim(LD(A)) \leq Dim(LD(h(A))). 
\]  
\end{cor}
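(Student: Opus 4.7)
The plan is to derive this corollary directly from Proposition \ref{techn_pro}, using only bi-Lipschitz invariance of the dimension (equivalent to the stated monotonicity along Lipschitz maps applied to $h$ and $h^{-1}$) together with a mild scaling/localization property that every natural notion of dimension on subsets of $\R^d$ enjoys.

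First I would apply Proposition \ref{techn_pro} to the set $A$ and the homeomorphism $h$, producing a bi-Lipschitz self-homeomorphism $\overline{h}$ of $\R^d$ with
\[
\overline{h}\bigl( L_{\leq 1}D(A) \bigr) \subset LD(h(A)).
\]
Bi-Lipschitz invariance and monotonicity under inclusion then immediately give
\[
Dim\bigl( L_{\leq 1}D(A) \bigr) = Dim\bigl( \overline{h}(L_{\leq 1}D(A)) \bigr) \leq Dim\bigl( LD(h(A)) \bigr).
\]

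The remaining step is to upgrade $L_{\leq 1}D(A)$ on the left-hand side to the full cone $LD(A)$. The inclusion $L_{\leq 1}D(A) \subset LD(A)$ gives one direction. For the reverse, I would write
\[
LD(A) = \{0\} \cup \bigcup_{k \in \N} k \cdot L_{\leq 1}D(A),
\]
observe that each dilation $k \cdot L_{\leq 1}D(A)$ is bi-Lipschitz equivalent to $L_{\leq 1}D(A)$ via $x \mapsto x/k$, and then conclude $Dim(LD(A)) = Dim(L_{\leq 1}D(A))$ via countable stability (in the Hausdorff case) or, more robustly, via the usual convention $Dim(B) = \sup_R Dim(B \cap B(0,R))$ for unbounded sets $B$, since $LD(A) \cap B(0,R) \subset \lceil R \rceil \cdot L_{\leq 1}D(A)$.

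The main obstacle, such as it is, is only bookkeeping: confirming that whatever class of ``reasonable'' dimensions the corollary is stated for, the identity $Dim(LD(A)) = Dim(L_{\leq 1}D(A))$ actually holds under the stated invariance properties. Once this is in hand, chaining the three dimension comparisons above yields $Dim(LD(A)) \leq Dim(LD(h(A)))$ and completes the proof.
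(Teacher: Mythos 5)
Your proposal is correct and is essentially the paper's intended argument: the authors state Corollary \ref{int_cor} as an immediate consequence of Proposition \ref{techn_pro} without writing out a proof, relying on monotonicity of any reasonable dimension along Lipschitz maps exactly as you do in your first chain of comparisons. The one step Proposition \ref{techn_pro} does not directly supply --- upgrading the truncated cone via $Dim(L_{\leq 1}D(A)) = Dim(LD(A))$, which you handle through the dilations $x \mapsto kx$ together with countable stability or the bounded-exhaustion convention (modulo the harmless omission of the point $0$ in your final inclusion) --- is precisely the routine bookkeeping the paper leaves implicit, and it goes through as you describe.
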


We finish the Appendix with the following natural question.
\bigskip

\begin{qu}
Is it true that the conclusion in Corollary \ref{int_cor} still holds true under a weaker assumption that $o \in \overline{A} $ satisfies the ($\widetilde{SSP}$) condition?
\end{qu}

  \end{document}